\documentclass[10pt]{article}
\usepackage{graphicx}
\usepackage{tikz}
\usepackage[margin=1in]{geometry}
\usepackage{amsmath, amssymb, amsfonts, amsthm}
\newtheorem{lem}{\noindent {\bf Lemma}}[section]
\newtheorem{prop}{\noindent {\bf Proposition}}[section]

\newtheorem{thm}{\noindent {\bf Theorem}}[section]

\newcounter{remark}
\setcounter{remark}{1}
\newenvironment{remark}{\smallskip\noindent {\bf Remark \arabic{section}.\arabic{remark}.}}
{\addtocounter{remark}{1}\par}
\newcounter{example}
\setcounter{example}{1}

\catcode`@=11 \setcounter{page}{1}
\date{}
\newcounter{defi}\setcounter{defi}{1}
\newenvironment{defi}{
\smallskip \noindent
{\bf
  Definition \arabic{section}.\arabic{defi}.
}}{\addtocounter{defi}{1}\par}
\newcommand{\ZZ}{\mathbb Z}
\newcommand{\NN}{\mathbb N}
\newcommand{\RR}{\mathbb R}

\title{\bf A metric space with its transfinite asymptotic dimension~$\omega+1$}
\author{\large  Yan Wu$^\ast$\qquad Jingming Zhu$^{\ast\ast}$
\footnote{
College of Mathematics Physics and Information Engineering, Jiaxing University, Jiaxing , 314001, P.R.China.
$^\ast$ E-mail: yanwu@mail.zjxu.edu.cn $^\ast\ast$ E-mail: jingmingzhu@mail.zjxu.edu.cn }}
\date{}

\begin{document}
\maketitle
\begin{center}
\begin{minipage}{0.9\textwidth}
\noindent{\bf Abstract.}
We construct a metric space whose transfinite asymptotic dimension and complementary-finite asymptotic dimension are both $\omega+1$,
where $\omega$ is the smallest infinite ordinal number. Therefore, we prove that the omega conjecture is not true.

{\bf Keywords } Asymptotic dimension, Transfinite asymptotic dimension, Complementary-finite asymptotic dimension;

\end{minipage}
\end{center}
\footnote{
This research was supported by
the National Natural Science Foundation of China under Grant (No.11871342,11301224,11326104,11401256,11501249)

}
\begin{section}{Introduction}\

In coarse geometry, asymptotic dimension of a metric space is an important concept which was defined by Gromov for studying asymptotic invariants of discrete groups \cite{Gromov}. This dimension can be considered as an asymptotic analogue of the Lebesgue covering dimension.  T. Radul defined the trasfinite asymptotic dimension (trasdim) which can be viewed as a transfinite extension for asymptotic dimension and gave examples of metric spaces with trasdim$=\omega$ and with trasdim$=\infty$, where $\omega$ is the smallest infinite ordinal number (see \cite{Radul2010}). But whether there is a metric space $X$ with $\omega<$trasdim$(X)<\infty$ is still unknown so far. M. Satkiewicz stated "omega conjecture"(see \cite{satkiewicz}). That is,  $\omega\leq$trasdim$(X)<\infty$ implies trasdim$(X)=\omega$. In this paper, we prove that the "omega conjecture" is not true by constructing a metric space $X_{\omega+1}$ with trasdim$(X_{\omega+1})=\omega+1$.

In the paper \cite{yanzhu2018}, we introduced another approach to classify the metric spaces with infinite asymptotic dimension, which is called complementary-finite asymptotic dimension (coasdim), and gave metric spaces $X_k$ with coasdim$(X_k)=\omega+k$ and trasdim$(X_k)=\omega$, where $k\in \NN$. The metric space $X_{\omega+1}$ constructed in this paper is the first metric space $Y$ with trasdim$(Y)>\omega$ and trasdim$(Y)=$coasdim$(Y)$.

The paper is organized as follows: In Section 2, we recall some definitions and properties of transfinite asymptotic dimension. In
Section 3, we introduce a concrete metric space $X_{\omega+1}$ and prove that transfinite asymptotic dimension and complementary-finite asymptotic dimension of $X_{\omega+1}$ are both $\omega+1$.
\end{section}

\begin{section}{Preliminaries}\

Our terminology concerning the asymptotic dimension follows from \cite{Bell2011} and for undefined terminology we refer to \cite{Radul2010}. Let~$(X, d)$ be a metric space and $U,V\subseteq X$, let
\[
\text{diam}~ U=\text{sup}\{d(x,y)| x,y\in U\}
\text{   and   }
d(U,V)=\text{inf}\{d(x,y)| x\in U,y\in V\}.
\]
Let $R>0$ and $\mathcal{U}$ be a family of subsets of $X$. $\mathcal{U}$ is said to be \emph{$R$-bounded} if
\[
\text{diam}~\mathcal{U}\stackrel{\bigtriangleup}{=}\text{sup}\{\text{diam}~ U~|~ U\in \mathcal{U}\}\leq R.
\]
In this case, $\mathcal{U}$ is said to be \emph{uniformly bounded}.
Let $r>0$, a family $\mathcal{U}$ is said to be\emph{ $r$-disjoint} if
\[
d(U,V)\geq r~~~~~\text{for every}~ U,V\in \mathcal{U}\text{~with~}U\neq V.
\]
In this paper, we denote
$\bigcup\{U~|~U\in\mathcal{U}\}$ by $\bigcup\mathcal{U}$ and denote $\{U~|~U\in\mathcal{U}_{1}\text{~or~}~U\in\mathcal{U}_{2}\}$
by $\mathcal{U}_{1}\cup\mathcal{U}_{2}$.

\begin{defi}
A metric space $X$ is said to have \emph{finite asymptotic dimension} if
there is an $n\in\NN$, such that for every $r>0$,
there exists a sequence of uniformly bounded families
$\{\mathcal{U}_{i}\}_{i=0}^{n}$ of subsets of $X$
such that the family
$\bigcup_{i=0}^{n}\mathcal{U}_{i}$ covers $X$ and each $\mathcal{U}_{i}$
is $r$-disjoint for $i=0,1,\cdots,n$. In this case, we say that
the asymptotic dimension of $X$ less than or equal to $n$, which is denoted by
 asdim$X\leq n$.
We say that asdim$X= n$ if  asdim$X\leq n$ and asdim$X\leq n-1$ is not true.

\end{defi}

%
T. Radul generalized asymptotic dimension of a metric space $X$ to transfinite asymptotic dimension which is denoted by trasdim$(X)$ (see \cite{Radul2010}).
Let $Fin\mathbb{N}$ denote the collection of all finite, nonempty
subsets of $\mathbb{N}$ and let $ M \subseteq Fin\mathbb{N}$. For $\sigma\in \{\varnothing\}\bigcup Fin\mathbb{N}$, let
$$M^{\sigma} = \{\tau\in Fin\mathbb{N} ~|~ \tau \cup \sigma \in M \text{ and } \tau \cap \sigma = \varnothing\}.$$

Let $M^a$ abbreviate $M^{\{a\}}$ for $a \in \NN$. Define the ordinal number Ord$M$ inductively as follows:
\begin{eqnarray*}
\text{Ord}M = 0 &\Leftrightarrow& M = \varnothing,\\
\text{Ord}M \leq \alpha &\Leftrightarrow& \forall~ a\in \mathbb{N}, ~\text{Ord}M^a < \alpha,\\
\text{Ord}M = \alpha &\Leftrightarrow& \text{Ord}M \leq \alpha \text{ and } \text{Ord}M < \alpha \text{ is not true},\\
\text{Ord}M = \infty &\Leftrightarrow& \text{Ord}M \leq\alpha \text{ is not true for every ordinal number } \alpha.
\end{eqnarray*}

%

Given a metric space $(X, d)$, define the following collection:
\[
\begin{split}
A(X, d) = \{\sigma \in Fin\mathbb{N}~ |~&\text{ there are no uniformly bounded families } \mathcal{U}_i  \text{ for } i \in \sigma
 \\& \text{ such that each } \mathcal{U}_i
\text{ is } i\text{-disjoint and }\bigcup_{i\in\sigma}\mathcal{U}_i \text{~covers~} X\}.
\end{split}\]

The \emph{transfinite asymptotic dimension} of $X$ is defined as trasdim$X$=Ord$A(X, d)$.
\end{section}
\begin{section}{Main result}\

%
Let $A$ be a subset of a metric space $X$ and $\epsilon>0$, we denote $\{x\in X|d(x,A)<\epsilon\}$ by $N_{\epsilon}(A)$. For the unit cube $I^n$, facets is the subset with the $i$-th coordinate $x_i=0$ or $x_i=1$ for some integer $i$ $(1\leq i\leq n)$.

To prove the main result, we will use another version of Lebesgue theorem:
\begin{thm}\rm(see \cite{Karasev}, Theorem 4.3)
\label{thmcube}
Let the unit cube $I^n$ be covered by a family of closed sets $\{X_i\}_{1\leq i\leq n}$. Then some connected component of $X_i$ intersects both the corresponding opposite facets $A_i$ and $B_i$.
\end{thm}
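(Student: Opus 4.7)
\noindent\emph{Proof plan.} The plan is to argue by contradiction and reduce Karasev's statement to the classical Eilenberg--Otto essential family property of the $n$ pairs of opposite facets of $I^{n}$: for any closed separators $C_{1},\ldots,C_{n}$ with $C_{i}$ separating $A_{i}$ from $B_{i}$, one has $\bigcap_{i=1}^{n}C_{i}\neq\varnothing$. This property is equivalent to Brouwer's no-retraction theorem and will replace any direct degree-theoretic computation. So suppose, for contradiction, that for every $i$ no connected component of $X_{i}$ intersects both $A_{i}$ and $B_{i}$.

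For each $i$, let $U_{i}$ be the union of all components of $X_{i}$ that meet $A_{i}$, and set $V_{i}=X_{i}\setminus U_{i}$. A first key step is to verify that $U_{i}$ (hence $V_{i}$) is closed in $I^{n}$: since $X_{i}$ is a closed subset of the compact metric space $I^{n}$ it is compact Hausdorff, so its components coincide with its quasi-components, and the standard clopen-separation argument then produces, for each $x\in V_{i}$, a clopen neighborhood of $x$ in $X_{i}$ disjoint from $A_{i}\cap X_{i}$. By construction $U_{i}\cap V_{i}=\varnothing$, $A_{i}\cap X_{i}\subseteq U_{i}$, $B_{i}\cap X_{i}\subseteq V_{i}$, and, crucially, $U_{i}\cap B_{i}=V_{i}\cap A_{i}=\varnothing$. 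Setting $A_{i}'=A_{i}\cup U_{i}$ and $B_{i}'=B_{i}\cup V_{i}$ therefore gives disjoint closed subsets of $I^{n}$; by normality I pick disjoint open neighborhoods $O_{i}^{A}\supseteq A_{i}'$ and $O_{i}^{B}\supseteq B_{i}'$, and let $C_{i}=I^{n}\setminus(O_{i}^{A}\cup O_{i}^{B})$. Then $C_{i}$ is a closed separator of $A_{i}$ from $B_{i}$, and, crucially, $C_{i}\cap X_{i}=\varnothing$ since $X_{i}=U_{i}\cup V_{i}\subseteq A_{i}'\cup B_{i}'$.

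The essential family property now yields $\bigcap_{i=1}^{n}C_{i}\neq\varnothing$; but the $X_{i}$ cover $I^{n}$ and each $C_{i}$ avoids $X_{i}$, so $\bigcap_{i=1}^{n}C_{i}\subseteq I^{n}\setminus\bigcup_{i=1}^{n}X_{i}=\varnothing$, a contradiction. The main technical obstacle I anticipate is exactly the closedness of $U_{i}$: if one allowed arbitrary closed decompositions $X_{i}=U_{i}\cup V_{i}$ in place of the component-based one, the argument would only recover the classical Lebesgue covering theorem (``some $X_{i}$ meets both $A_{i}$ and $B_{i}$'') rather than Karasev's refinement. It is precisely the compact Hausdorff structure of $X_{i}$ that lets ``separation along components'' produce closed pieces, and this is the step where the extra strength of the conclusion is earned.
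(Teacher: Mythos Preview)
The paper does not supply its own proof of this theorem; it is quoted verbatim from Karasev's paper and used as a black box in the proof of Proposition~\ref{prop1}. So there is nothing in the present paper to compare your argument against.

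That said, your proof is correct. The key technical step---closedness of $U_i$---is handled properly: since $X_i$ is compact Hausdorff, components coincide with quasi-components, and for each $x\in V_i$ a finite intersection of clopen sets separating $x$ from points of the compact set $A_i\cap X_i$ yields a clopen neighborhood of $x$ in $X_i$ missing $A_i\cap X_i$; any such clopen set lies entirely in $V_i$, so $V_i$ is open in $X_i$ and $U_i$ is closed. The remaining verifications ($A_i'\cap B_i'=\varnothing$, $C_i$ is a genuine separator of $A_i$ from $B_i$, and $C_i\cap X_i=\varnothing$) are routine, and the contradiction with the essential family property of the facet pairs is exactly the right endpoint.

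Your route is elementary and self-contained, relying only on the classical Eilenberg--Otto theorem (equivalently Brouwer's fixed point theorem). Karasev's paper, as its title indicates, develops the result in a different framework; for the purposes of the present paper your argument would serve equally well and avoids importing external machinery.
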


\begin{lem}\rm(see \cite{yanzhu2018}, Proposition 2.1)
\label{lem:trasdim}
Given a metric space $X$,  let $l\in \mathbb{N}\cup \{0\}$, then the following conditions are equivalent:
\begin{itemize}
\item\rm(1) trasdim$(X) \leq \omega+l$;

\item\rm(2) For every $k\in\NN$, there exists $m=m(k)\in\NN$ such that for every~$n\in\NN$, there are uniformly bounded families $\mathcal{U}_{-l},\mathcal{U}_{-l+1},\cdots,\mathcal{U}_{m}$ satisfying $\mathcal{U}_i$ is $k$-disjoint for $i=-l,\cdots, 0$, $\mathcal{U}_j$ is $n$-disjoint for $j=1,2,\cdots, m$ and
$\bigcup_{i=-l}^{m}\mathcal{U}_i$ covers $X$.
\end{itemize}
\end{lem}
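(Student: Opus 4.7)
The plan is to unpack $\text{trasdim}(X) \leq \omega+l$ into a concrete combinatorial statement and then match it with condition (2) by a careful choice of index sets. First, by induction on $l$ using the recursive clause ``$\text{Ord}\,M \leq \alpha$ iff $\text{Ord}\,M^a < \alpha$ for every $a$'', one shows that $\text{Ord}\,M \leq \omega+l$ holds iff $\text{Ord}\,M^\sigma < \omega$ for every $\sigma \in Fin\NN$ with $|\sigma|=l+1$. A separate easy induction on $m$ yields $\text{Ord}\,M \leq m$ iff every element of $M$ has size at most $m$. Combining these for $M=A(X)$, the condition $\text{trasdim}(X) \leq \omega+l$ becomes: for every $\sigma$ with $|\sigma|=l+1$ there exists $m = m(\sigma)$ such that for every $\tau \in Fin\NN$ disjoint from $\sigma$ with $|\tau| \geq m+1$, $\tau \cup \sigma \notin A(X)$, i.e.\ there are uniformly bounded families $\{\mathcal{V}_i\}_{i \in \sigma \cup \tau}$ covering $X$ with each $\mathcal{V}_i$ being $i$-disjoint.

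For (1)$\Rightarrow$(2): given $k \in \NN$, set $\sigma_0 := \{k, k+1, \ldots, k+l\}$, an $(l+1)$-element set contained in $[k, \infty)$, and let $m := \text{Ord}\,A(X)^{\sigma_0} + 1$ (finite by the first paragraph). For any $n \in \NN$, set $n' := \max\{n, k+l+1\}$ and $\tau := \{n', n'+1, \ldots, n'+m-1\}$, so $\tau$ is disjoint from $\sigma_0$, $|\tau| = m$, and $\min\tau \geq n$. The families supplied by $\tau \cup \sigma_0 \notin A(X)$ give uniformly bounded $\mathcal{V}_i$ ($i \in \sigma_0 \cup \tau$) covering $X$ with $\mathcal{V}_i$ being $i$-disjoint. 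Each $\mathcal{V}_i$ with $i \in \sigma_0$ is then $k$-disjoint (since $i \geq k$), and each $\mathcal{V}_j$ with $j \in \tau$ is $n$-disjoint (since $j \geq n$); relabeling the $l+1+m$ families as $\mathcal{U}_{-l}, \ldots, \mathcal{U}_0, \mathcal{U}_1, \ldots, \mathcal{U}_m$ yields (2).

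For (2)$\Rightarrow$(1): fix $\sigma \in Fin\NN$ with $|\sigma| = l+1$ and set $k := \max\sigma$; apply (2) at this $k$ to obtain $m = m(k)$. I claim $\text{Ord}\,A(X)^\sigma \leq m$. For any $\tau$ disjoint from $\sigma$ with $|\tau| \geq m+1$, apply (2) with $n := \max\tau$ to obtain $l+1$ families that are $k$-disjoint and $m$ families that are $n$-disjoint, all uniformly bounded and together covering $X$. Assign the $l+1$ $k$-disjoint families to the indices in $\sigma$ (each such family is $i$-disjoint for any $i \leq k$), assign $m$ of the $n$-disjoint families to the $m$ largest elements of $\tau$ (each is $j$-disjoint for any $j \leq n$), and use the empty family for each remaining index of $\tau$. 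This certifies $\tau \cup \sigma \notin A(X)$; since $\tau$ was arbitrary, $\text{Ord}\,A(X)^\sigma \leq m$, and the first paragraph gives $\text{trasdim}(X) \leq \omega+l$.

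The main subtlety, used in both directions, is the monotonicity ``$i$-disjoint $\Rightarrow j$-disjoint for $j \leq i$'' combined with the ability to pad with empty families; it is this that lets one move between families carrying a fixed disjointness parameter ($k$ or $n$) and families indexed by arbitrary $i \in \sigma_0$ or $j \in \tau$. The two design choices that make everything match up are $\sigma_0 \subseteq [k, \infty)$ in the forward direction and $n := \max\tau$ in the converse.
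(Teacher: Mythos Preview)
The paper does not actually prove this lemma; it is quoted verbatim from \cite{yanzhu2018} (Proposition~2.1 there) and used as a black box. So there is no in-paper argument to compare your attempt against.

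That said, your argument is correct and is essentially the standard unwinding of the Borst--Radul ordinal $\mathrm{Ord}$ applied to $A(X)$. The two reductions you state---that $\mathrm{Ord}\,M\le\omega+l$ iff $\mathrm{Ord}\,M^{\sigma}<\omega$ for every $(l{+}1)$-element $\sigma$, and that $\mathrm{Ord}\,M\le m$ iff every member of $M$ has cardinality at most $m$---are exactly the combinatorial facts needed, and both follow by the straightforward inductions you indicate (using $(M^{a})^{\sigma'}=M^{\{a\}\cup\sigma'}$ when $a\notin\sigma'$ and $=\varnothing$ otherwise). Your choices $\sigma_{0}=\{k,\dots,k+l\}$ and $\tau=\{n',\dots,n'+m-1\}$ in the forward direction, and $k=\max\sigma$, $n=\max\tau$ together with padding by empty families in the converse, are precisely the right bookkeeping moves; the monotonicity ``$i$-disjoint $\Rightarrow$ $j$-disjoint for $j\le i$'' is indeed what makes the relabeling legitimate. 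This is the same argument one finds in \cite{yanzhu2018}.
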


Let $X_1=\mathbb{R}$, $X_2=(\mathbb{R}\times 2\mathbb{Z})\bigcup (2\mathbb{Z}\times\mathbb{R} )$,
$X_3=(\mathbb{R}\times 3\mathbb{Z}\times 3\mathbb{Z})\bigcup (3\mathbb{Z}\times\mathbb{R}\times 3\mathbb{Z} )\bigcup (3\mathbb{Z}\times 3\mathbb{Z}\times \mathbb{R} )$,...
For $a=(a_1,a_2,...,a_l)\in X_{l}$ and $b=(b_1,b_2,...,b_k)\in X_{k}$ with $l\leq k$, let $a'=(a_1,...,a_l,0,...,0)$,$b'=(b_1,...,b_k,0,...,0)\in \bigoplus \mathbb{R}$ and put $c=0$ if $l=k$ and $c=l+(l+1)+...+(k-1)$ if $l<k$. Define a metric $d$ on $\bigsqcup X_n$ by
\[d(a,b)=\text{max}\{d_{\text{max}}(a',b'),c\},\]
where $d_{\text{max}}$ is the maximum metric in $\bigoplus\mathbb{R}.$
Let $X_{\omega+1}=(\bigsqcup X_n,d)$.

\begin{prop}
\label{prop1}
trasdim$(X_{\omega+1})\leq\omega$ is not true.
\end{prop}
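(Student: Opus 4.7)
The plan is proof by contradiction, combining the reformulation of $\mathrm{trasdim}\leq\omega$ given by Lemma~\ref{lem:trasdim} with the cube covering Theorem~\ref{thmcube}. Suppose $\mathrm{trasdim}(X_{\omega+1})\leq\omega$. Applying Lemma~\ref{lem:trasdim} with $l=0$ and $k=1$, we obtain $m\in\NN$ such that for every $n\in\NN$ there exist uniformly $R$-bounded families $\mathcal{U}_0,\mathcal{U}_1,\ldots,\mathcal{U}_m$ covering $X_{\omega+1}$, with $\mathcal{U}_0$ being $1$-disjoint and each $\mathcal{U}_j$ ($j\geq 1$) being $n$-disjoint. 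The goal is to exhibit an $n$ for which no such cover can exist.

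The next step is to take $n$ much larger than $R+m+2$ and restrict attention to $X_{m+2}\subset X_{\omega+1}$; its induced metric coincides with the maximum metric on the first $m+2$ coordinates of $\RR^{m+2}$. Fix a very large cube $Q=[0,L]^{m+2}\subset\RR^{m+2}$ and form the closed sets
\[
F_j \;=\; Q\cap N_{(m+2)/2}\Big(\bigcup\mathcal{U}_j\Big),\qquad j=0,1,\ldots,m.
\]
Since $X_{m+2}$ is $(m+2)/2$-dense in $\RR^{m+2}$ under $d_{\max}$, these $m+1$ closed sets cover $Q$. For $j\geq 1$, the $n$-disjointness of $\mathcal{U}_j$ together with $n>m+2$ keeps the thickenings of distinct elements disjoint, so every connected component of $F_j$ has diameter at most $R+m+2<L$; in particular no such component can meet both facets of any opposite pair of $Q$.

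Then I would apply Theorem~\ref{thmcube} to the $(m+2)$-dimensional cube $Q$ (identified with $I^{m+2}$) viewed as covered by the $m+2$ closed sets $F_0,F_1,\ldots,F_m,\varnothing$. Under \emph{any} assignment of these sets to the $m+2$ coordinate pairs of opposite facets, the theorem forces some set to have a connected component meeting both facets of its assigned pair. The empty set is ruled out trivially and $F_1,\ldots,F_m$ by the component bound, so $F_0$ must span its assigned pair. Since the assignment is arbitrary, $F_0$ must contain a connected component spanning \emph{every} one of the $m+2$ coordinate directions.

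The main obstacle, and the step where the specific structure of $X_{\omega+1}$ has to be exploited, is contradicting this last conclusion. The difficulty is that $\mathcal{U}_0$ is only $1$-disjoint, so after thickening by $(m+2)/2$ the pieces can in principle chain into arbitrarily large connected components of $F_0$. The resolution should use that $X_{m+2}$ is a $1$-dimensional complex, namely a union of axis-parallel lines meeting only at the lattice $((m+2)\ZZ)^{m+2}$, so any such chain must propagate along these lines. A careful combinatorial/geometric accounting, for instance by tracking how one chain of $R$-bounded, $1$-disjoint pieces can simultaneously cross coordinate hyperplanes in all $m+2$ directions, should show that no single connected component of $F_0$ can span all $m+2$ directions once $L$ is chosen large enough relative to $R$, $m$ and the lattice spacing, yielding the desired contradiction.
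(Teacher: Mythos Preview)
Your setup follows the paper's through the use of Lemma~\ref{lem:trasdim} and Theorem~\ref{thmcube}, but the final paragraph is where the entire argument lives, and there the plan does not work. The difficulty you flag is real, and it is fatal to the approach as written: once you thicken $\mathcal{U}_0$ by $(m+2)/2$, you have destroyed the only control you had on it. With $k=1$, elements of $\mathcal{U}_0$ can sit at distance exactly $1$ along the lines of $X_{m+2}$, so after a thickening of radius $(m+2)/2\geq 3/2$ their neighbourhoods merge; nothing prevents $F_0$ from being all of $Q$. Concretely, an $R$-bounded $1$-disjoint family of short intervals placed densely along every line of $X_{m+2}$ has thickening equal to $N_{(m+2)/2}(X_{m+2})=\RR^{m+2}$. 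So the hoped-for ``careful combinatorial/geometric accounting'' showing that no component of $F_0$ spans all directions cannot exist. (There is also a smaller logical slip: varying the facet assignment only gives, for each direction, \emph{some} component of $F_0$ spanning it, not a single component spanning all directions simultaneously.)

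The paper avoids this by \emph{not} thickening $\mathcal{U}_0$. It works in $X_{m+1}$ and in the cube $[0,B+n]^{m+1}$, thickens only $\mathcal{U}_1,\ldots,\mathcal{U}_m$ to closed sets $\widetilde U_1,\ldots,\widetilde U_m$, and takes $\widetilde U_0$ to be the closed \emph{complement} of their union. Theorem~\ref{thmcube} then forces a connected component of $\widetilde U_0$ to meet a pair of opposite facets, i.e.\ gives a path $\alpha$ in the cube that stays at distance $\geq m+1$ from every $\bigcup\mathcal{U}_i$, $i\geq 1$. Covering $\alpha$ by the grid of $(m+1)$-cubes with edges in $X_{m+1}$ and passing to their $1$-skeleton yields a connected path $\beta\subset X_{m+1}$ of diameter $B+n$ which is disjoint from $\bigcup_{i\geq 1}\mathcal{U}_i$ and hence lies in $\bigcup\mathcal{U}_0$. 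But a connected set contained in the union of a $k$-disjoint family lies in a single member of that family, so $\mathrm{diam}\,\beta(I)\leq B$, a contradiction. The missing idea in your outline is precisely this: replace ``thicken $\mathcal{U}_0$'' by ``take the complement of the thickenings of $\mathcal{U}_1,\ldots,\mathcal{U}_m$,'' and then push the resulting spanning component back into the $1$-skeleton $X_{m+1}$ where the $k$-disjointness of $\mathcal{U}_0$ actually bites.
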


\begin{proof}
Suppose that trasdim$(X_{\omega+1})\leq \omega$. Then by Lemma \ref{lem:trasdim}, for every $k\in\NN$, there exists $m=m(k)\in\NN$ such that for every~$n\in\NN$, there exist $B=B(k,n)>0$ and $B$-bounded families $\mathcal{V}_{0},\mathcal{V}_{1},\cdots,\mathcal{V}_{m}$ such that $\mathcal{V}_0$ is $k$-disjoint, $\mathcal{V}_j$ is $(n+4m+4)$-disjoint for $j=1,2,\cdots, m$ and
$\bigcup_{i=0}^{m}\mathcal{V}_i$ covers $X$.
Let $\mathcal{U}_i=\{U\cap X_{m+1}\cap [0,B+n]^{m+1}|U\in \mathcal{V}_i \}$ for $i=0,1,2,\cdots, m$. \

Without loss of generality, we assume that $n\in\NN$, $n>4m+4$ and $\frac{B+n}{m+1}=p\in \mathbb{N}$. Then $[0,B+n]^{m+1}$ contains $(m+1)$-dimensional closed cubes $V_1,V_2,...,V_{p^{m+1}}$, where $V_i=\prod_{j=1}^{m+1}[(m+1)n_j^{(i)},(m+1)(n_j^{(i)}+1)]$, whose edges are contained in $X_{m+1}$ for $i=1,2,\cdots, p^{m+1}$.

Let $\mathcal{V}=\{V_j\}_{1\leq j\leq p^{m+1}}$ and $A_i$, $B_i$ be the pair of two opposite facets of $[0,B+n]^{m+1}$ for $i=1,2,\cdots,m+1$.

Let $\widetilde{\mathcal{U}_i}=\{V\in \mathcal{V}|V\cap U\neq\emptyset \text{ for some } U\in \mathcal{U}_i\}$ for $1\leq i\leq m$.

Then, by the definition, $\widetilde{\mathcal{U}_i}$ is $(n+4m+4)$-disjoint and $(B+2m+2)$-bounded subset family in $[0,B+n]^{m+1}$ for $1\leq i\leq m$.

Let
\[
\widetilde{U_i}=N_{m+1}(\bigcup \mathcal{U}_i)\text{ and }\widetilde{U_0}=\overline{[0,B+n]^{m+1}\setminus \bigcup_{1\leq i\leq m}\widetilde{U_i}}
\]

Then $\{\widetilde{U}_i\}_{0\leq i\leq m}$ is a closed cover of $[0,B+n]^{m+1}$ with covering multiplicity at most $m+1$. By the Theorem \ref{thmcube}, $\exists$ at least one $\widetilde{U}_i$ which intersects both the corresponding opposite facets $A_i$ and $B_i$. But every connected component in $\widetilde{U}_i$ for $1\leq i\leq m$ is $(B+2m+2)$-bounded, so $\widetilde{U_0}\neq \emptyset$ and there is a path $\alpha:I\to \widetilde{U}_0$ such that $\alpha(0)$ and $\alpha(1)$ belong to opposite facets $A_i$ and $B_i$ respectively.

Let $\mathcal{W}=\{V\in\mathcal{V}|V\cap \alpha(I)\neq \emptyset\}$. Then $\bigcup \mathcal{W}$ is a connected subset in $[0,B+n]^{m+1}$ which intersects both the corresponding opposite facets $A_i$ and $B_i$. But $\alpha(I)\subset \widetilde{U}_0$ implies that any $V\in \mathcal{V}$ can not intersect both $\alpha(I)$ and $\bigcup{\mathcal{U}_i}$ for any $1\leq i\leq m$. Then $(\bigcup \mathcal{W})\cap (\bigcup\mathcal{U}_i)=\emptyset$ for $1\leq i\leq m$. So $\exists$ a path $\beta:I\to [0,B+n]^{m+1}$ such that $\beta(I)$ is a subset of the 1-dimensional skeleton of the $V$'s in $\mathcal{W}$(and hence a subset of $X_{m+1}$) and intersects both the corresponding opposite facets $A_i$ and $B_i$. Since $\bigcup \mathcal{W}\cap (\bigcup\mathcal{U}_i)=\emptyset$ for $1\leq i\leq m$ and $\beta(I)\subset \bigcup \mathcal{W}$, $\beta(I)\cap (\bigcup\mathcal{U}_i)=\emptyset$ for $1\leq i\leq m$. So $\beta(I)\subset (\bigcup\mathcal{U}_0)\cap X_{m+1}(=\bigcup\mathcal{V}_0)$ which is a contradiction with the fact that $\mathcal{V}_0$ is $B$-bounded and $k$-disjoint.

\end{proof}

\begin{defi}
Every ordinal number $\gamma$ can be represented as $\gamma=\lambda(\gamma)+n(\gamma)$, where $\lambda(\gamma)$ is the limit ordinal or $0$ and $n(\gamma)\in \mathbb{N}\cup \{0\}$. Let $X$ be a metric space, we define complementary-finite asymptotic dimension coasdim$(X)$ inductively as follows:
\begin{itemize}
\item\text{coasdim}$(X)=-1$ iff $X=\emptyset$,

\item \text{coasdim}$(X)\leq \gamma$ iff for every $r>0$ there exist $r$-disjoint uniformly bounded families $\mathcal{U}_0,...,\mathcal{U}_{n(\gamma)}$ of subsets of $X$ such that \text{coasdim}$(X\setminus \bigcup(\bigcup_{i=0}^{n(\gamma)}\mathcal{U}_i))<\lambda(\gamma)$,

\item \text{coasdim}$(X)=\gamma$ iff \text{coasdim}$(X)\leq \gamma$ and for every $\beta<\gamma$,
\text{coasdim}$(X)\leq\beta$ is not true.

\item \text{coasdim}$(X)=\infty$ iff for every ordinal number $\gamma$, \text{coasdim}$(X)\leq \gamma$ is not true.
\end{itemize}
\end{defi}

\begin{remark}
\begin{itemize}
\item
It is easy to see that for every $n\in\mathbb{N}\cup \{0\}$, \text{coasdim}$(X)\leq n$ if and only if \text{asdim}$(X)\leq n$.
\item
To simplify, we will abuse the notation a little bit. In the following, asdim$(X)<\omega$ means that asdim$(X)\leq n$ for some $n\in\mathbb{N}$.
\end{itemize}

\end{remark}

\begin{lem}\rm(see~\cite{yanzhu2018}, Theorem 3.2)
\label{lem:coasdim}
Let $X$ be a metric space, if coasdim$(X)\leq \omega+k$ for some $k\in\mathbb{N}$, then trasdim$(X)\leq\omega+k$. Especially, coasdim$(X)=\omega$ implies trasdim$(X)=\omega$.
\end{lem}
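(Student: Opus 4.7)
The plan is to verify $\mathrm{trasdim}(X) \leq \omega + k$ directly against the combinatorial criterion in Lemma \ref{lem:trasdim}, using the recursive definition of coasdim to supply the required families. Fix $k' \in \NN$ and apply the definition of coasdim$(X) \leq \omega + k$ with parameter $r = k'$: since $\lambda(\omega+k)=\omega$ and $n(\omega+k)=k$, we obtain $k'$-disjoint, uniformly bounded families $\mathcal{U}_{0},\ldots,\mathcal{U}_{k}$ whose complement
\[
Y := X \setminus \bigcup_{i=0}^{k}\bigcup\mathcal{U}_{i}
\]
satisfies $\mathrm{coasdim}(Y)<\omega$. By the remark following the definition of coasdim, this is equivalent to $\mathrm{asdim}(Y)\leq m$ for some $m=m(k')\in\NN\cup\{0\}$. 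The essential point is that the families $\mathcal{U}_{0},\ldots,\mathcal{U}_{k}$, and hence the integer $m(k')$, are fixed once $k'$ is chosen and do not depend on any subsequent parameter.

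Given any $n\in\NN$, $\mathrm{asdim}(Y)\leq m$ produces $n$-disjoint, uniformly bounded families $\mathcal{V}_{1},\ldots,\mathcal{V}_{m+1}$ covering $Y$. Re-indexing, put $\mathcal{W}_{i-k}:=\mathcal{U}_{i}$ for $i=0,\ldots,k$, so that $\mathcal{W}_{-k},\ldots,\mathcal{W}_{0}$ are $k'$-disjoint, and $\mathcal{W}_{j}:=\mathcal{V}_{j}$ for $j=1,\ldots,m+1$, so these are $n$-disjoint. All $\mathcal{W}_{j}$ are uniformly bounded (the two blocks have independent but finite bounds) and $\bigcup_{j=-k}^{m+1}\mathcal{W}_{j}$ covers $X$. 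This is precisely the data needed in condition (2) of Lemma \ref{lem:trasdim} with $l=k$, so $\mathrm{trasdim}(X)\leq\omega+k$.

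For the ``especially'' assertion, $\mathrm{coasdim}(X)=\omega$ yields $\mathrm{coasdim}(X)\leq\omega$, and the same construction with a single outer family $\mathcal{U}_{0}$ (the case $k=0$) gives $\mathrm{trasdim}(X)\leq\omega$. For the reverse bound, if $\mathrm{trasdim}(X)\leq n$ for some finite $n$, unwinding the definition of $\mathrm{Ord}$ and of $A(X,d)$ shows that for every $r\in\NN$ the set $\{r,r+1,\ldots,r+n\}\notin A(X,d)$, i.e.\ there exist uniformly bounded families that cover $X$ with the $i$-th one being $(r+i)$-disjoint; each is in particular $r$-disjoint, so $\mathrm{asdim}(X)\leq n$ and therefore $\mathrm{coasdim}(X)\leq n<\omega$, contradicting $\mathrm{coasdim}(X)=\omega$. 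Combined with $\mathrm{trasdim}(X)\leq\omega$ this gives $\mathrm{trasdim}(X)=\omega$.

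The argument is mostly bookkeeping, and the point requiring care is keeping the two disjointness parameters separated: the $k'$-disjoint families $\mathcal{U}_{0},\ldots,\mathcal{U}_{k}$ are chosen once per $k'$ so that $m(k')$ is well-defined before $n$ appears, while the $n$-disjoint families $\mathcal{V}_{1},\ldots,\mathcal{V}_{m+1}$ are chosen afterwards but contribute only $m(k')+1$ many indices, exactly what Lemma \ref{lem:trasdim} demands.
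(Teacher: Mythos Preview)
The paper does not supply a proof of this lemma; it is quoted verbatim from \cite{yanzhu2018} (Theorem~3.2) and used as a black box. Your argument is correct and is exactly the natural one: you unfold the definition of $\mathrm{coasdim}(X)\le\omega+k$ to obtain the $(k{+}1)$ outer $k'$-disjoint families and a finite-asdim remainder, then feed the remainder into the criterion of Lemma~\ref{lem:trasdim} with $l=k$. The care you take in fixing $m(k')$ before choosing $n$ is precisely the point of that criterion, and your reverse inequality for the ``especially'' clause via $\{r,r{+}1,\ldots,r{+}n\}\notin A(X,d)$ is the standard reduction of finite trasdim to finite asdim.

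One cosmetic remark: the main inequality in the lemma is stated for $k\in\NN$, while the ``especially'' clause corresponds to $k=0$; you correctly note that the same construction goes through for $l=0$ in Lemma~\ref{lem:trasdim}, so this is not a gap.
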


\begin{prop}
\label{prop2}
trasdim$(X_{\omega+1})\leq\omega+1$.
\end{prop}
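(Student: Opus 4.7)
By Lemma \ref{lem:coasdim} it suffices to prove that $\text{coasdim}(X_{\omega+1}) \leq \omega+1$. Unfolding the definition with $\lambda(\omega+1)=\omega$ and $n(\omega+1)=1$, this amounts to showing that for every $r>0$ there exist two $r$-disjoint uniformly bounded families $\mathcal{U}_0,\mathcal{U}_1$ of subsets of $X_{\omega+1}$ such that $X_{\omega+1}\setminus\bigcup(\mathcal{U}_0\cup\mathcal{U}_1)$ has finite asymptotic dimension. My plan is to fix $r$, choose a threshold $N=N(r)$ much larger than $r$ (any large constant multiple of $r$ such as $N\geq 10r$ will do), and split
\[
X_{\omega+1}\;=\;Y_{\geq N}\sqcup Y_{<N},\qquad Y_{\geq N}:=\bigsqcup_{n\geq N}X_n,\quad Y_{<N}:=\bigsqcup_{n<N}X_n.
\]
The subspace $Y_{<N}$ will play the role of the remainder: it is a finite disjoint union of the spaces $X_1,\ldots,X_{N-1}$; each $X_n$ is coarsely equivalent to the lattice $n\mathbb{Z}^n$ and hence has $\text{asdim}(X_n)=n$, so the finite union theorem for asymptotic dimension yields $\text{asdim}(Y_{<N})\leq N-1<\omega$.

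The substantive task is to cover $Y_{\geq N}$ by two $r$-disjoint uniformly bounded families. I would construct them componentwise: build $\mathcal{U}_0^{(n)},\mathcal{U}_1^{(n)}$ on each $X_n$ (with $n\geq N$) separately and take $\mathcal{U}_i:=\bigcup_{n\geq N}\mathcal{U}_i^{(n)}$. The across-component step is automatic, for the metric on $X_{\omega+1}$ gives $d(X_n,X_m)\geq\min(n,m)\geq N\geq r$ whenever $n\neq m$ with $n,m\geq N$, which preserves $r$-disjointness. For the single-$n$ construction, regard $X_n$ as the $1$-skeleton of the lattice $n\mathbb{Z}^n\subset\mathbb{R}^n$ with mesh $n$. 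Two-colour the vertices by the parity of $\sum_i(q_i/n)$ and place the ``star'' $S_q:=X_n\cap N_r(q)$, which is the union of the $2n$ radial segments of length $r$ meeting at $q$ and has max-metric diameter $2r$, into $\mathcal{U}_0^{(n)}$ or $\mathcal{U}_1^{(n)}$ according to the colour of $q$. On each edge interior, a segment of length $n-2r$ between two oppositely coloured stars, I would apply the standard alternating-chunks cover of a real interval by two $r$-disjoint families with $O(r)$-diameter pieces, choosing the orientation of the alternation so that the first chunk abutting each star lies in the opposite family from that star. A short case analysis then confirms $r$-disjointness within each $\mathcal{U}_i^{(n)}$: same-colour stars are at max-metric distance $\geq 2n-2r\geq r$; same-family chunks on a single edge have construction-imposed gap $\geq r$; chunks on two edges meeting at a common vertex sit at radial distance $\geq r$ from that vertex along distinct axes and so lie at max-metric distance $\geq r$; and the star-versus-chunk interaction in a common family is handled by the parity choice.

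The expected main obstacle is keeping the diameter bound $B=B(r)$ genuinely independent of $n$, even though $X_n$ has vertex valence $2n$ and edges of length $n$, both growing without bound. Two observations defuse this: first, for any $n>r$ the star at each vertex has max-metric diameter only $2r$ regardless of how many arms it has, since a union of radial $r$-long segments through a common point has max-metric diameter $2r$; second, the long edge interior can be chunked by the universal $\mathbb{R}$-construction, whose pattern is indifferent to $n$ up to a minor adjustment of the last one or two chunks on each edge so that the global pattern matches the opposite endpoint colours, and this adjustment is always feasible once $n\geq 10r$. Granting this cover of $Y_{\geq N}$ by two $r$-disjoint families of diameter bounded by some $B=O(r)$ independent of $n$, one obtains $\text{coasdim}(X_{\omega+1})\leq\omega+1$, and Lemma \ref{lem:coasdim} then delivers $\text{trasdim}(X_{\omega+1})\leq\omega+1$.
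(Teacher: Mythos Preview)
Your proof is correct and follows the same route as the paper: reduce to $\text{coasdim}\leq\omega+1$ via Lemma~\ref{lem:coasdim}, split off $\bigsqcup_{n<N}X_n$ as the finite-asdim remainder, and cover each $X_n$ with $n\geq N$ by vertex ``stars'' together with alternating edge-chunks arranged into two $r$-disjoint $O(r)$-bounded families. The only differences are cosmetic---the paper simply places \emph{all} vertex-stars into the single family $\mathcal{U}_0$ (so no parity colouring is needed), and your estimate ``$\geq 2n-2r$'' for same-colour stars is too strong in the max metric (same-parity lattice points such as $0$ and $(n,n,0,\dots,0)$ are at max-distance $n$), though the correct bound $\geq n-2r$ still exceeds $r$ once $n\geq 10r$, so the argument is unaffected.
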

\begin{proof}
By the Lemma \ref{lem:coasdim}, it suffices to show that coasdim$(X_{\omega+1})\leq \omega+1$. i.e., for every $k>0$ there exist $k$-disjoint uniformly bounded families $\mathcal{U}_0,\mathcal{U}_{1}$ of subsets of $X_{\omega+1}$ such that \text{coasdim}$(X_{\omega+1}\setminus \bigcup(\mathcal{U}_0\cup\mathcal{U}_1))<\omega$.\
Let $n\in\NN$ and $n\geq 6k$, let
$\mathcal{V}_0^{(n)}=\{[in-k,in+k]~|~i\in\ZZ\}.
$
\begin{itemize}
\item If $[\frac{n-2k}{k}]=2m$, then let
\[
\mathcal{V}_1^{(n)}=\{[in+2jk,in+(2j+1)k]~|~i\in\ZZ,j=1,2,...,m-1\},
\]
\[
\mathcal{V}_2^{(n)}=\{[in+(2j-1)k,in+2jk]~|~i\in\ZZ,j=1,2,...,m-1\}\cup\{[in+2mk-k,in+2mk+2k]~|~i\in\ZZ\}.
\]
\item If $[\frac{n-2k}{k}]=2m+1$, then let
\[
\mathcal{V}_1^{(n)}=\{[in+2jk,in+(2j+1)k]~|~i\in\ZZ,j=1,2,...,m\},
\]
\[
\mathcal{V}_2^{(n)}=\{[in+(2j-1)k,in+2jk]~|~i\in\ZZ,j=1,2,...,m\}\cup\{[in+2mk+k,in+2mk+3k]~|~i\in\ZZ\}.
\]
\end{itemize}
Note that $\mathcal{V}_0^{(n)}\cup\mathcal{V}_1^{(n)}$,$\mathcal{V}_2^{(n)}$ are $k$-disjoint, $3k$-bounded and  $\mathcal{V}_0^{(n)}\cup\mathcal{V}_1^{(n)}\cup \mathcal{V}_2^{(n)}$ covers $\RR$.
Let
\[
\mathcal{W}_0^{(n)}=\{(\prod_{i=1}^nV_i)\cap X_{n}~|~V_i\in \mathcal{V}_0^{(n)}\}
\]
\[
\mathcal{W}_1^{(n)}=\{\prod_{i=1}^{j-1}\{nn_{i}\}\times V_j\times\prod_{i=j+1}^n \{nn_{i}\}~|~n_{i}\in\ZZ, V_j\in \mathcal{V}_1^{(n)},j=2,3,\cdots, n\}\cup
\{ V_1\times\prod_{i=2}^n \{nn_{i}\}~|~n_{i}\in\ZZ, V_1\in \mathcal{V}_1^{(n)}\}
\]
\[
\mathcal{W}_2^{(n)}=\{\prod_{i=1}^{j-1}\{nn_{i}\}\times V_j\times\prod_{i=j+1}^n \{nn_{i}\}~|~ V_j\in \mathcal{V}_2^{(n)},j=2,3,\cdots, n\}\cup
\{ V_1\times\prod_{i=2}^n \{nn_{i}\}~|~n_{i}\in\ZZ, V_1\in \mathcal{V}_2^{(n)}\}
\]
It is easy to see that $\mathcal{W}_i^{(n)}$ are $k$-disjoint and $3k$-bounded for $i=0,1,2$.
Let
\[\mathcal{U}_0^{(n)}=\mathcal{W}_0^{(n)}\bigcup\mathcal{W}_1^{(n)},~
\mathcal{U}_1^{(n)}=\mathcal{W}_2^{(n)}.\]
Since $\mathcal{V}_0^{(n)}\cup\mathcal{V}_1^{(n)}$ is $k$-disjoint and $3k$-bounded, $\mathcal{U}_0^{(n)}$ is $k$-disjoint and $3k$-bounded.
For every $ x\in X_{n}$, without loss of generality, we assume that $x=(x_1,...,x_{n})\in \mathbb{R}\times n\mathbb{Z}\times ... \times n\mathbb{Z}$.
Then $x$ is in one of the following cases.
\begin{itemize}
\item $x_i\in n\mathbb{Z}\text{~for~}i=2,3,\cdots$ and $x_1\in [ni-k,ni+k]$ for some $i\in \mathbb{Z}$, it is easy to see that~$x\in \bigcup \mathcal{W}_0^{(n)}$.

\item $x_i\in n\mathbb{Z}\text{~for~}i=2,3,\cdots$ and $x_1\in V$ for some $V\in \mathcal{V}_1^{(n)}$, it is easy to see that~$x\in \bigcup \mathcal{   W}_1^{(n)}$.

\item $x_i\in n\mathbb{Z}\text{~for~}i=2,3,\cdots$ and $x_1\in V$ for some $V\in \mathcal{V}_2^{(n)}$, it is easy to see that~$x\in \bigcup \mathcal{   W}_2^{(n)}$.
\end{itemize}
So $\mathcal{U}_0^{(n)}\cup \mathcal{U}_1^{(n)}$ covers $X_{n}$.
Let
\[\mathcal{U}_0=\bigcup_{n\geq 6k}\mathcal{U}_0^{(n)},\mathcal{U}_1=\bigcup_{n\geq 6k}\mathcal{U}_1^{(n)}.\]
Since $d(X_i,X_j)\geq n$ when $i,j>n$ and $i\neq j$, then $\mathcal{U}_0,\mathcal{U}_1$ are $k$-disjoint , $3k$-bounded and $\mathcal{U}_0\cup\mathcal{U}_1$ covers $\bigsqcup_{n\geq 6k}X_n$.
It follows that \[
\text{coasdim}(X_{\omega+1}\setminus \bigcup(\mathcal{U}_0\cup\mathcal{U}_1))=
\text{asdim}(X_{\omega+1}\setminus \bigsqcup_{n\geq 6k}X_n)
=\text{asdim}( \bigsqcup_{n=1}^{6k-1}X_n)
<\omega, \]
which implies coasdim$(X_{\omega+1})\leq\omega+1$. $\square$\\
\end{proof}

\begin{remark}
\begin{itemize}
\item By the Proposition \ref{prop1} and the Proposition \ref{prop2}, we can obtain that trasdim$(X_{\omega+1})=\omega+1$.
\item By the Proposition \ref{prop1} and the Lemma \ref{lem:coasdim}, we can obtain that coasdim$(X_{\omega+1})\leq\omega$ is not true.
Moreover, we obtain that coasdim$(X_{\omega+1})=\omega+1$ by the Proposition \ref{prop2}.
\end{itemize}
\end{remark}

\end{section}

\providecommand{\bysame}{\leavevmode\hbox to3em{\hrulefill}\thinspace}
\providecommand{\MR}{\relax\ifhmode\unskip\space\fi MR }
\providecommand{\MRhref}[2]{%
  \href{http://www.ams.org/mathscinet-getitem?mr=#1}{#2}
}
\providecommand{\href}[2]{#2}

\end{document}